\documentclass[11pt]{article}

\usepackage[margin=1in]{geometry}
\usepackage{amsmath,amssymb,amsthm,mathtools}
\usepackage{enumitem}
\usepackage{microtype}
\usepackage[colorlinks=true,linkcolor=blue,citecolor=blue,urlcolor=blue]{hyperref}

\numberwithin{equation}{section}

\newtheorem{theorem}{Theorem}[section]
\newtheorem{lemma}[theorem]{Lemma}

\newcommand{\norm}[1]{\left\lVert #1\right\rVert}
\newcommand{\abs}[1]{\left\lvert #1\right\rvert}

\title{The Numerical Index of Two-Dimensional Real $\ell_p$ Spaces}
\author{Rafael Chiclana\thanks{Department of Mathematics, Michigan State University. 
		Email: \texttt{chiclan1@msu.edu}.}}
\date{}

\begin{document}
\maketitle

\begin{abstract}
The computation of the numerical index of classical Banach spaces is one of
the original problems in the theory.  In this paper, we compute the numerical
index of two-dimensional real \(\ell_p\)-spaces for all \(p\ge1\).  More
precisely, we prove that
\[
n(\ell_p^2)=v(J),
\qquad
J=
\begin{pmatrix}
	0&1\\
	-1&0
\end{pmatrix},
\]
confirming the conjectured formula in the two-dimensional real case.
\end{abstract}

\section{Introduction}

The numerical index is a classical isometric invariant of Banach spaces,
introduced to quantify the relation between the norm and the numerical radius
of bounded linear operators.  Let \(X\) be a Banach space over the real field,
let \(X^*\) denote its dual space, and let \(\mathcal L(X)\) denote the space
of all bounded linear operators from \(X\) into itself.  For
\(T\in\mathcal L(X)\), the numerical radius of \(T\) is defined by
\[
v(T)
:=
\sup\bigl\{
|x^*(Tx)|:
x\in X,\ x^*\in X^*,\ \|x\|=\|x^*\|=x^*(x)=1
\bigr\}.
\]
The numerical radius is a seminorm on \(\mathcal L(X)\) and satisfies
\(v(T)\le \|T\|\) for every \(T\in\mathcal L(X)\).  The numerical index of
\(X\) is the number
\[
n(X)
:=
\inf\bigl\{
v(T):T\in\mathcal L(X),\ \|T\|=1
\bigr\}.
\]

The concept was introduced in the seminal work of Duncan, McGregor, Pryce and
White \cite{DuncanMcGregorPryceWhite1970}, after a question of Lumer, and was
developed systematically in the monographs of Bonsall and Duncan
\cite{BonsallDuncan1971,BonsallDuncan1973}.  One of the original problems in the theory was to compute the numerical index of classical \(L_p\)-spaces. In the real case, this problem remains
open for \(1<p<\infty\), \(p\ne2\). In the two-dimensional case, it was
conjectured that the numerical index of \(\ell_p^2\) is attained by the
rotation by \(90^\circ\), which corresponds to the operator
\[
J=
\begin{pmatrix}
	0&1\\
	-1&0
\end{pmatrix}.
\]
In this paper, we prove the conjecture.
\begin{theorem}\label{theo: main}
	For every \(p\ge1\),
	\[
	n(\ell_p^2)=v(J).
	\]
\end{theorem}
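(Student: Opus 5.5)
The upper bound \(n(\ell_p^2)\le v(J)\) is immediate from \(\|J\|=1\) on \(\ell_p^2\), since \(J\) permutes coordinates up to sign. For the reverse inequality we must show \(v(T)\ge v(J)\,\|T\|\) for every \(2\times 2\) real matrix \(T\).

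\textbf{Reductions.} We use the symmetries of \(\ell_p^2\). The isometry group contains the coordinate swap and the sign changes, and conjugating \(T\) by an isometry preserves both \(\|T\|\) and \(v(T)\). Duality also helps: \(v(T^*)=v(T)\) and \(\|T^*\|=\|T\|\), with \(T^*\) acting on \(\ell_q^2\), \(1/p+1/q=1\), so it suffices to treat \(p\ge 2\) (or \(p\le 2\)) provided we also show \(v_p(J)=v_q(J)\). The case \(p=2\) is classical (\(n=0=v(J)\)), and \(p=1,\infty\) give \(n=1\).

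For \(1<p<\infty\) the space is smooth, and the numerical range is parametrized explicitly. For a unit vector \(x=(\cos\theta\text{-type})\) the unique supporting functional is \(x^*=(|x_1|^{p-1}\operatorname{sgn}x_1,|x_2|^{p-1}\operatorname{sgn}x_2)\). Hence
\[
v(T)=\max_{\|x\|_p=1}\bigl|\langle Tx, x^{\#}\rangle\bigr|,
\]
where \(x^{\#}\) is this functional. This value depends linearly on \(T\). Writing \(T=aI+S\) is not helpful, because \(v(T+\lambda I)\ne v(T)\) in general. Instead we decompose
\[
T=\begin{pmatrix}a&b\\ c&d\end{pmatrix},\qquad f_T(x)=a|x_1|^p+d|x_2|^p+b\,x_2|x_1|^{p-1}\operatorname{sgn}x_1+c\,x_1|x_2|^{p-1}\operatorname{sgn}x_2.
\]

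\textbf{Key step (the main obstacle).} We need to show
\[
\max_x|f_T(x)|\ge v(J)\,\|T\|_{p\to p}.
\]
Two features make this hard: \(\|T\|_{p\to p}\) has no closed form, and the extremal operator \(J\) has \(f_J(x)=x_2|x_1|^{p-1}\operatorname{sgn}x_1-x_1|x_2|^{p-1}\operatorname{sgn}x_2\).

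We would exploit the averaging over the symmetry group \(G\) (of order 8). Since \(v\) is convex, \(v(T)\ge v(\frac1{|G|}\sum_g \pm g T g^{-1})\) for suitable signed averages. The aim is to reduce to a two-parameter family \(T=\alpha A+\beta J\) with \(A\) symmetric, say \(A=\operatorname{diag}(1,-1)\) or \(A=\begin{pmatrix}0&1\\1&0\end{pmatrix}\). Norm control fails under averaging, though, so the convexity must instead run the other way. Concretely, we fix \(T\) and choose \(x\) near the norming vector of \(T\) and near the points where \(|f_J|\) is maximal. We then compare \(|f_T(x)|\) at the finitely many \(G\)-images of a point \(x_0\) maximizing \(f_J\), showing that some signed combination of these values dominates \(v(J)\|T\|\).

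The explicit plan is as follows. First, bound \(\|T\|\) above by an expression in \(a,b,c,d\) valid for all \(p\), such as a Riesz–Thorin or Schur-type estimate, or better, the exact norm on the diagonal and antidiagonal parts. Second, evaluate \(f_T\) at the points \((t,\pm s)\) and \((\pm s,t)\) and at the coordinate axes (where \(f_T=a\) or \(d\)). Third, show that the maximum of these finitely many linear forms in \((a,b,c,d)\) dominates \(v(J)\) times the norm bound.

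We expect the delicate part to be the case where \(T\) is close to a multiple of \(J\) plus a small symmetric perturbation. There the inequality is tight, and a first-order perturbation analysis around \(J\) is needed, using the fact that the maximizer of \(f_J\) is non-degenerate.
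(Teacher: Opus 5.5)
Your proposal has a genuine gap: it never supplies the upper bound on $\norm{T}$ that the argument depends on.

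Your step two, evaluating $f_T$ at the $G$-images of the maximizer of $f_J$, is essentially the paper's Lemma~\ref{lem: lower bound}. For the normalized class $T=\begin{pmatrix}a&1\\-(1-c)&-d\end{pmatrix}$ it gives $v(T)\ge v(J)+H(a,c,d)$. Your step three, ``show that the maximum of these linear forms dominates $v(J)$ times the norm bound,'' is the theorem restated. The norm bounds you suggest are not strong enough for it.

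\begin{itemize}
\item The diagonal-plus-antidiagonal bound $\norm{T}\le\max(\abs{a},\abs{d})+\max(\abs{b},\abs{c})$ fails outright. For $p=10$ one has $v(J)\approx0.644$. Take $T=\begin{pmatrix}0&1\\0&-1/2\end{pmatrix}$. Then $v(T)\approx0.797$ by Lemma~\ref{lem:numerical-radius-formula}, while the true norm is $\norm{T}=(1+2^{-10})^{1/10}$. Since $v(J)\cdot 3/2\approx0.97>v(T)$, no choice of evaluation points can close the argument with that bound.
\item Riesz--Thorin interpolation is the tool earlier work used, and it only reached $3\le p\le6$.
\item A first-order perturbation analysis at $J$ is local and cannot give a global inequality. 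The example above shows that crude bounds break far from $J$, in the upper triangular regime.
\end{itemize}

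The missing idea is a norm bound that matches the lower bound exactly: $\norm{T}\le 1+H(a,c,d)/v(J)$. Together with the lower bound this gives $v(T)/\norm{T}\ge v(J)$ identically. The paper proves it in these steps.

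\begin{enumerate}
\item Write $T=(1-c)J+cS$ with $S=\begin{pmatrix}a/c&1\\0&-d/c\end{pmatrix}$. Convexity of the norm and the homogeneity of $H$ reduce the bound to upper triangular $S$.
\item Since $\norm{S}^p=\sup_{s\ge0}\frac{(A+s)^p+B^ps^p}{1+s^p}$ is increasing in $A$ and $B$, it suffices to check the extreme corner of the sublevel set $\{H(A,1,B)\le n\}$.
\item That corner splits as a fixed base matrix plus $\frac{n-L_0}{N_0}\begin{pmatrix}1&0\\0&-1\end{pmatrix}$. The second term is controlled using $R_0\le N_0$.
\item The base matrix is bounded by $K=M_0/R_0$ through the convexity criterion $A\le K-1$, $B^p\le K^p-K^{p-1}$.
\item Checking these conditions uses the critical-point identity $\tanh(\eta z_0)\tanh z_0=\eta$ for the maximizer of $R$.
\end{enumerate}

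Your reductions would also need to reach this specific sign-normalized class for the linear forms to be explicit. Concretely, you need Merí--Quero's reduction to $\begin{pmatrix}a&b\\-c&-d\end{pmatrix}$ with nonnegative entries, then $b\ge c$ via the coordinate swap, then scaling to $b=1$.
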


The cases \(p=1\) and \(p=2\) are classical. Moreover, as recalled in
\cite[Section~1]{MeriQuero2024}, the range \(1<p<2\) reduces by duality to
the range \(p>2\). Thus, in the proof, we always work under the assumption \(p>2\).

\subsection*{Related work}

Several general facts about the numerical index are classical.  In the real
case, Hilbert spaces of dimension at least two have numerical index zero, while
\(L_1(\mu)\)-spaces and \(C(K)\)-spaces have numerical index one.  For
\(1<p<\infty\), \(p\ne2\), the situation is much more subtle.  The exact value
of \(n(L_p(\mu))\) is not known in general, and even finite-dimensional cases
have required separate arguments.

For $1<p<\infty$, set $M_p:=v(J)$ on $\ell_p^2$. For the two-dimensional problem, Martín and Merí \cite{MartinMeri2009} proved general estimates for \(n(\ell_p^2)\).   If \(q=p/(p-1)\) is the
conjugate exponent, then
\[
\max\{2^{-1/p},2^{-1/q}\}M_p
\le n(\ell_p^2)\le M_p,
\]
and \(M_p=M_q\) by duality.  Since the upper estimate is realized by the rotation \(J\), this
led to the conjecture that equality should hold for every \(1<p<\infty\).

The conjecture was subsequently verified in several ranges of \(p\).  Merí and
Quero \cite{MeriQuero2021} studied numerical indices for absolute and
symmetric norms on the plane and proved, as a consequence, that
\[
n(\ell_p^2)=M_p
\qquad
\text{for } \frac32\le p\le3.
\]
Monika and Zheng \cite{MonikaZheng2023} refined these methods and proved the
equality in the larger interval
\[
1+\alpha_0\le p\le \alpha_1,
\]
where \(\alpha_0\approx0.4547\) and \(\alpha_1\) is determined by
\[
\frac1{1+\alpha_0}+\frac1{\alpha_1}=1.
\]
More recently, Merí and Quero \cite{MeriQuero2024} used Riesz--Thorin
interpolation to extend the equality for
\[
\frac65\le p\le\frac32
\qquad\text{and}\qquad
3\le p\le6.
\]
Theorem~\ref{theo: main} completes the computation of the numerical index of
real two-dimensional \(\ell_p\)-spaces.

We briefly describe the organization of the paper.  In
Section~\ref{sec:notation}, we introduce the preliminary results, notation,
and constants used in the proof. Section~\ref{sec:skeleton} contains the main auxiliary lemmas and
explains how they imply Theorem~\ref{theo: main}.  The proof of
Theorem~\ref{theo: main} is then given in Section~\ref{sec:4}.  The remaining
sections are devoted to the proofs of the auxiliary lemmas stated in
Section~\ref{sec:skeleton}.

\section{Preliminaries}\label{sec:notation}

We shall use the following formula for the numerical radius of an operator on
\(\ell_p^2\).

\begin{lemma}[Lemma 1 in \cite{MeriQuero2024}]\label{lem:numerical-radius-formula}
	Let \(1<p<\infty\) and $T=
	\begin{pmatrix}
		a&b\\
		c&d
	\end{pmatrix}
	$ be an operator on \(\ell_p^2\). Then
	\[
	v(T)
	=
	\max\left\{
	\max_{0\le t\le1}
	\frac{\abs{a+dt^p}+\abs{bt+ct^{p-1}}}{1+t^p},
	\,
	\max_{0\le t\le1}
	\frac{\abs{d+at^p}+\abs{ct+bt^{p-1}}}{1+t^p}
	\right\}.
	\]
\end{lemma}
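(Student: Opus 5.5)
The plan is to compute \(v(T)\) directly from its definition, using the fact that \(\ell_p^2\) is smooth for \(1<p<\infty\). Smoothness means every norm-one \(x\) has a unique norming functional \(x^*\), which is given by an explicit formula. The supremum defining \(v(T)\) then becomes a supremum of an explicit function over the unit sphere. Sign symmetries and homogeneity reduce it to a one-parameter family indexed by \(t\in[0,1]\).

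\textbf{Step 1 (norming functional).} Let \(x=(x_1,x_2)\) with \(\norm{x}_p=1\). By the equality case of Hölder's inequality, the unique \(x^*\in\ell_q^2\) with \(\norm{x^*}_q=x^*(x)=1\) is
\[
x^*=\bigl(\operatorname{sgn}(x_1)\abs{x_1}^{p-1},\,\operatorname{sgn}(x_2)\abs{x_2}^{p-1}\bigr).
\]
Expanding \(Tx=(ax_1+bx_2,\,cx_1+dx_2)\) gives
\[
x^*(Tx)=a\abs{x_1}^p+d\abs{x_2}^p+b\,x_2\operatorname{sgn}(x_1)\abs{x_1}^{p-1}+c\,x_1\operatorname{sgn}(x_2)\abs{x_2}^{p-1}.
\]
Hence \(v(T)=\sup\{\abs{\Phi(x)}:\norm{x}_p=1\}\), where \(\Phi(x)\) denotes the right-hand side above. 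The expression \(\Phi\) is positively \(p\)-homogeneous in \(x\). Therefore \(v(T)=\sup_{x\neq0}\abs{\Phi(x)}/\norm{x}_p^p\), and we may drop the normalization.

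\textbf{Step 2 (reduction to \(t\in[0,1]\)).} The quantity \(\Phi(x)\) is unchanged when \(x\) is replaced by \(-x\). Split according to whether \(\abs{x_2}\le\abs{x_1}\) or \(\abs{x_1}\le\abs{x_2}\).

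In the first case \(x_1\neq0\), so we may scale so that \(x_1=1\) and write \(x_2=st\) with \(t\in[0,1]\) and \(s=\pm1\). Substituting yields
\[
\frac{\Phi(x)}{\norm{x}_p^p}=\frac{(a+dt^p)+s\,(bt+ct^{p-1})}{1+t^p}.
\]
Here \(s\) multiplies both \(bt\) and \(ct^{p-1}\): the first because \(x_2=st\), the second because \(\operatorname{sgn}(x_2)=s\) and \(x_1=1\). At \(t=0\) the value of \(s\) is irrelevant.

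The second case is symmetric, with the roles \(a\leftrightarrow d\) and \(b\leftrightarrow c\) exchanged. It produces \(\bigl((d+at^p)+s(ct+bt^{p-1})\bigr)/(1+t^p)\).

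\textbf{Step 3 (optimizing the sign, and attainment).} For fixed \(t\), use the identity \(\max_{s=\pm1}\abs{A+sB}=\abs{A}+\abs{B}\) for real \(A,B\). Taking the supremum over \(s\) gives the two displayed functions of \(t\). Both are continuous on the compact interval \([0,1]\), because \(p>1\) makes \(t^{p-1}\) continuous at \(0\). Hence the suprema are maxima, and the formula of Lemma~\ref{lem:numerical-radius-formula} follows by taking the larger of the two.

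The only point requiring care is the bookkeeping of signs in Step 2. We must check that a single sign \(s\) multiplies both off-diagonal terms, since this is what allows the sign optimization to produce \(\abs{A}+\abs{B}\). Everything else is routine.
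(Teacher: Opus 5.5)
Your proof is correct and complete. The uniqueness of the norming functional gives \(v(T)=\sup_{x\neq0}\abs{\Phi(x)}/\norm{x}_p^p\). The relation \(\Phi(\lambda x)=\abs{\lambda}^p\Phi(x)\) for all real \(\lambda\neq0\) justifies normalizing a negative coordinate to \(1\). A single sign \(s\) does multiply both off-diagonal terms, and \(0^{p-1}=0\) handles \(t=0\). Finally, \(\max_{s=\pm1}\abs{A+sB}=\abs{A}+\abs{B}\) closes the argument.

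The paper itself gives no proof of this lemma and simply imports it from \cite{MeriQuero2024}. Your argument is a full, self-contained direct derivation of the formula: smoothness of \(\ell_p^2\), the explicit duality map, and reduction to the two regions \(\abs{x_2}\le\abs{x_1}\) and \(\abs{x_1}\le\abs{x_2}\).
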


We shall also use the following elementary invariance property (see \cite[Section~1]{MeriQuero2024}). If
\(U\colon X\to X\) is a surjective linear isometry, then for every
\(T\in\mathcal L(X)\),
\begin{equation}\label{eq:invariance}
\|U^{-1}TU\|=\|T\|,
\qquad
v(U^{-1}TU)=v(T).
\end{equation}
Define
\[
 L(t):=\frac{t^{p-1}}{1+t^p},\qquad
 M(t):=\frac{t}{1+t^p},\qquad
 N(t):=\frac{1-t^p}{1+t^p},
\]
and
\[
 R(t):=M(t)-L(t)=\frac{t-t^{p-1}}{1+t^p},
 \qquad 0\le t\le1.
\]
For $p>2$, a standard calculus argument shows that the function $R$ has a unique maximizer in $(0,1)$, which we denote by $t_0$.  We set
\[
 L_0:=L(t_0),\qquad
 M_0:=M(t_0),\qquad
 N_0:=N(t_0),\qquad
 R_0:=R(t_0)=v(J).
\]
By standard symmetry reductions, which are carried out
in the proof of Theorem~\ref{theo: main}, it is enough to study
operators of the form
\begin{equation}\label{eq: T}
T=
\begin{pmatrix}
	a & 1\\
	-(1-c) & -d
\end{pmatrix},
\qquad
a,d\ge0,
\quad
c\in[0,1].
\end{equation}
For convenience, we introduce the parameters
\[
\alpha:=\frac{a+d}{2},
\qquad
\beta:=\frac{d-a}{2},
\qquad
\eta:=\frac{p-2}{p}.
\]
We also define
\begin{equation}\label{eq:Hacd}
	H(a,c,d):=
	\max\left\{
	cL_0+\abs{\beta-\alpha N_0},
	\,
	\abs{\beta+\alpha N_0}-cM_0
	\right\}.
\end{equation}

\section{The main estimates}\label{sec:skeleton}

The proof of Theorem \ref{theo: main} is based on two estimates.  Our goal is to show that
\[
\frac{v(T)}{\norm{T}}\ge v(J)
\]
for every matrix \(T\) of the form \eqref{eq: T}, where $J$ is the rotation by $90^\circ$.  The argument separates into a lower
bound for the numerical radius and an upper bound for the operator norm.

The first estimate gives a quantitative lower bound for \(v(T)\) by evaluating the numerical
radius at the point where \(J\) attains its numerical radius.

\begin{lemma}\label{lem: lower bound}
Let $a,d\ge0$, $c\in[0,1]$, and $T$ defined as in (\ref{eq: T}). Then
\[
 v(T)\ge v(J)+H(a,c,d).
\]
\end{lemma}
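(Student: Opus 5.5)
The plan is to evaluate both expressions in Lemma~\ref{lem:numerical-radius-formula} at the single point \(t=t_0\) and drop the maxima over \(t\). This is legitimate because \(v(T)\) dominates each of the two inner expressions at any fixed \(t\in[0,1]\). For the matrix \eqref{eq: T}, the entries in the notation of Lemma~\ref{lem:numerical-radius-formula} are
\[
(a,\;b,\;c,\;d)\;\longmapsto\;(a,\;1,\;-(1-c),\;-d).
\]
So we need to rewrite the two quantities
\[
\frac{\abs{a-dt^p}+\abs{t-(1-c)t^{p-1}}}{1+t^p},
\qquad
\frac{\abs{-d+at^p}+\abs{-(1-c)t+t^{p-1}}}{1+t^p}
\]
in terms of \(L,M,N,R\) and the parameters \(\alpha,\beta\).

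The key identities are
\[
\frac{1}{1+t^p}=\frac{1+N(t)}{2},
\qquad
\frac{t^p}{1+t^p}=\frac{1-N(t)}{2}.
\]
From these, the diagonal parts become
\[
\frac{a-dt^p}{1+t^p}=-\beta+\alpha N(t),
\qquad
\frac{-d+at^p}{1+t^p}=-\beta-\alpha N(t).
\]
The off-diagonal parts become
\[
\frac{t-(1-c)t^{p-1}}{1+t^p}=R(t)+cL(t),
\qquad
\frac{t^{p-1}-(1-c)t}{1+t^p}=-\bigl(R(t)-cM(t)\bigr).
\]

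At \(t=t_0\) we have \(R_0=R(t_0)>0\), since \(t_0\) maximizes \(R\) on \((0,1)\) and, for \(p>2\), \(R(t)>0\) there. Since also \(c,L_0\ge0\), the first expression equals exactly \(R_0+cL_0+\abs{\beta-\alpha N_0}\). For the second expression we use the trivial bound \(\abs{R_0-cM_0}\ge R_0-cM_0\), which gives at least \(R_0+\abs{\beta+\alpha N_0}-cM_0\).

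Taking the larger of the two lower bounds gives \(v(T)\ge R_0+H(a,c,d)\). Since \(R_0=v(J)\) by definition, this is the claim. There is no real obstacle here: the proof is bookkeeping. The only points needing care are the signs when translating into \(\alpha,\beta\), and confirming \(R_0>0\) so that the absolute value in the first off-diagonal term can be removed.
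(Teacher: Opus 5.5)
Your proof is correct and is essentially the paper's argument: you evaluate both expressions of Lemma~\ref{lem:numerical-radius-formula} at $t=t_0$, get exactly $R_0+cL_0+\abs{\beta-\alpha N_0}$ from the first, and get at least $R_0-cM_0+\abs{\beta+\alpha N_0}$ from the second via $\abs{x}\ge x$. The only cosmetic difference is that you rewrite the diagonal terms using $1/(1+t^p)=(1+N(t))/2$ and $t^p/(1+t^p)=(1-N(t))/2$ instead of substituting $a=\alpha-\beta$, $d=\alpha+\beta$, which amounts to the same computation.
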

Since $H(a,c,d)\geq 0$, Lemma~\ref{lem: lower bound} shows that, among matrices of the form
\eqref{eq: T}, the numerical radius is minimized by the rotation \(J\).
Moreover, \(H(a,c,d)\) provides a quantitative estimate of how the numerical
radius increases as the parameters \(a,d,c\) move away from the rotational case. A key feature of \(H\), which follows directly from its definition in
\eqref{eq:Hacd}, is the homogeneity
\begin{equation}\label{eq: homogeneity of H}
H(\lambda a,\lambda c,\lambda d)
=
\lambda H(a,c,d),
\qquad
\lambda\ge0,
\end{equation}
which will play an important role in the proof of the main theorem.

The second estimate provides an upper bound for the operator norm of $T$.

\begin{lemma}\label{lem: upper bound}
	Let $a,d\ge0$, $c\in[0,1]$, and $T$ defined as in (\ref{eq: T}). Then, 
	\[
	\norm{T}\le \frac{v(J)+H(a,c,d)}{v(J)}.
	\]
\end{lemma}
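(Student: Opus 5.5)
We must show \(R_0\norm{T}\le R_0+H(a,c,d)\). Because \(H\ge0\) and \(\norm{T}\) is convex in the parameters, the natural route is to bound \(\norm{T}\) by a sum of contributions, one from each perturbation away from \(J\). We compare each contribution with the corresponding term in \(H\), using the extremality of \(t_0\).

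Write
\[
T=J+\begin{pmatrix}a&0\\0&-d\end{pmatrix}+\begin{pmatrix}0&0\\c&0\end{pmatrix}
=J+\beta'\,\mathrm{diag}(1,-1)\cdot(\text{sign})+\dots,
\]
and more precisely
\[
\begin{pmatrix}a&0\\0&-d\end{pmatrix}=\alpha\,\mathrm{diag}(1,-1)-\beta\,I .
\]
By the triangle inequality and \(\norm{J}=1\) on \(\ell_p^2\), this gives the crude bound \(\norm{T}\le 1+\alpha+\abs{\beta}+c\). That bound is not sharp enough. The terms \(\alpha\), \(\beta\) and \(c\) enter \(H\) with coefficients \(N_0/R_0\), \(1/R_0\), \(L_0/R_0\) or \(M_0/R_0\), and these involve partial cancellations, such as \(\abs{\beta\pm\alpha N_0}\).

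The plan is instead to compute \(\norm{T}\) by duality. We have \(\norm{T}=\sup\{y^*(Tx)\}\) over unit \(x\in\ell_p^2\) and \(y^*\in\ell_q^2\). The case \(a=d=c=0\) is attained at \(x=(1,t_0)/(1+t_0^p)^{1/p}\)-type points. We then use the first-order optimality of \(t_0\), namely \(R'(t_0)=0\), to linearize. Concretely, we will bound \(\norm{T}\) from above by a convex, positively homogeneous function \(G(a,c,d)\) plus \(1\), which is exactly \(\norm{J}\). The natural candidate is the tangent (supporting) functional of the norm at \(J\), plus a second-order correction.

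The key structural point is the following. Since \(\norm{\cdot}\) is a convex function of \((a,c,d)\) and \(1+H/R_0\) is convex, homogeneous-plus-constant, it suffices to check the inequality on the \emph{extreme} directions. By the homogeneity \eqref{eq: homogeneity of H}, the right side is affine along rays from \(J\). The left side is convex along rays. So a ray inequality \(\norm{J+\lambda S}\le 1+\lambda H(S)/R_0\) holds for all \(\lambda\ge0\) if it holds as \(\lambda\to\infty\) (asymptotic slope) and at \(\lambda\to0^+\). However, convexity gives the wrong direction here. Therefore we will instead estimate \(\norm{T}\) directly using the explicit \(\ell_p\) operator norm of \(2\times2\) matrices via Hölder. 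For \(x=(x_1,x_2)\) we have
\[
\norm{Tx}_p^p=\abs{ax_1+x_2}^p+\abs{(1-c)x_1+dx_2}^p.
\]
We split into the sign cases where \(\beta-\alpha N_0\) and \(\beta+\alpha N_0\) are positive or negative. In each case we try to show that
\[
\abs{ax_1+x_2}^p+\abs{(1-c)x_1+dx_2}^p\le\Bigl(1+\tfrac{H}{R_0}\Bigr)^p\norm{x}_p^p,
\]
by interpolating between the two expressions in \(H\) with Riesz–Thorin-style convexity in \(1/p\).

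The main obstacle is this last pointwise inequality. The constants \(L_0,M_0,N_0\) enter only through \(t_0\), and the inequality must be sharp near \(J\). The first thing to try is to parametrize \(x=(1,s)\) with \(s\in[-1,1]\) and reduce to a one-variable inequality. We would then show that the worst case occurs when the mass of \(x\) is concentrated so that \(s^{p}\) relates to \(t_0^p\). Using \(R'(t_0)=0\), i.e. \((1-(p-1)t_0^{p-2})(1+t_0^p)=pt_0^{p-1}(t_0-t_0^{p-1})\), we would eliminate \(p\)-dependence and compare with the two branches of \(H\).
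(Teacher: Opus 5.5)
There is a genuine gap: the proposal never actually bounds $\norm{T}$. Everything is deferred to the pointwise inequality $\abs{ax_1+x_2}^p+\abs{(1-c)x_1+dx_2}^p\le(1+H/R_0)^p\norm{x}_p^p$, which you yourself flag as the main obstacle. The only mechanism you offer for it, ``Riesz--Thorin-style convexity in $1/p$'', does not fit the problem. Here $p$ is fixed, and $L_0,M_0,N_0$ depend on $p$ through $t_0$, so interpolating in the exponent would compare against a different function $H$ at each exponent. Splitting into the sign cases of $\beta\pm\alpha N_0$ and invoking $R'(t_0)=0$ is a plan, not an argument. The one-variable inequality it leads to is the entire content of the lemma, and it is left unproved.

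You also discarded the reduction that does work, and you need it. Your remark that ``convexity gives the wrong direction'' is mistaken. Along the segment from $J$ in the direction $T-J=\begin{pmatrix}a&0\\c&-d\end{pmatrix}$, the norm is convex and the right-hand side is affine by \eqref{eq: homogeneity of H}. The two agree at $J$, and a convex function lies below its chord, so only the far endpoint needs checking. The point is to stop where the segment leaves the admissible region, at parameter $1/c$, where the $(2,1)$ entry vanishes. There $T=(1-c)J+cS$ with $S=\begin{pmatrix}a/c&1\\0&-d/c\end{pmatrix}$, so $\norm{T}\le(1-c)+c\norm{S}$. Homogeneity then turns $\norm{S}\le 1+H(a/c,1,d/c)/R_0$ into the claim, and $c=0$ follows by a limit.

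Extending the ray to infinity, as in your asymptotic-slope version, genuinely fails. In the direction $a=d=0$, $c=1$ one gets $\norm{\begin{pmatrix}0&1\\ \lambda-1&0\end{pmatrix}}=\lambda-1$ for large $\lambda$. This exceeds $1+\lambda L_0/R_0$ whenever $L_0<R_0$, for example at $p=4$, where $t_0^2=2-\sqrt3$. So the truncation is essential.

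After this reduction you still need the triangular estimate of Lemma~\ref{lem: upper triangular}, which is where the real work lies and which has no counterpart in your proposal. The paper proceeds as follows:
\begin{enumerate}[label=\textup{(\arabic*)}]
\item By \eqref{eq:norm-of-S}, $\norm{S}$ is increasing in $A$ and $B$. This lets it pass to the corner $\Phi=-(n-L_0)$, $\Psi=n+M_0$ of the set $\{H(A,1,B)\le n\}$.
\item It splits off $\frac{n-L_0}{N_0}\,\mathrm{diag}(1,-1)$, using $R_0\le N_0$.
\item It verifies the remaining base case with $K=M_0/R_0$ through Lemma~\ref{lem:norm-criterion}: $A\le K-1$ and $B^p\le K^p-K^{p-1}$ imply norm at most $K$.
\item This last step uses consequences of the critical equation $\tanh(\eta z_0)\tanh z_0=\eta$, notably $R_0(1+N_0)/(\eta+N_0)\le t_0$.
\end{enumerate}
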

Combining Lemmas~\ref{lem: lower bound} and~\ref{lem: upper bound} yields
\[
\frac{v(T)}{\norm{T}}
\ge 
v(J)
\]
for every matrix \(T\) of the form \eqref{eq: T}. Since standard symmetry and
normalization arguments reduce the problem to this class of matrices,
Theorem~\ref{theo: main} follows.

In order to prove Lemma \ref{lem: upper bound}, we will show that it suffices to analyze the case $c=1$, which corresponds to an upper triangular matrix.  Indeed, we can express $T$ as the convex combination
\[
T=(1-c)J+cS,
\]
where
\[
S=
\begin{pmatrix}
	a/c & 1\\
	0 & -d/c
\end{pmatrix}.
\]
Then, the convexity of the operator norm together with the homogeneity of $H(a,c,d)$ given in (\ref{eq: homogeneity of H}) can be used to reduce the problem to estimating the norm of \(S\).

\begin{lemma}\label{lem: upper triangular}
Let $A,B\ge0$, and consider
\[
 S=\begin{pmatrix}A&1\\0&-B\end{pmatrix}.
\]
Then
\[
 \norm{S}\le \frac{v(J)+H(A,1,B)}{v(J)}.
\]
\end{lemma}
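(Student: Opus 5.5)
We have to bound $\norm{S}$ for $S=\begin{pmatrix}A&1\\0&-B\end{pmatrix}$ on $\ell_p^2$, $p>2$. With $c=1$ we have $\alpha=(A+B)/2$, $\beta=(B-A)/2$, and
\[
H(A,1,B)=\max\bigl\{L_0+\abs{\beta-\alpha N_0},\ \abs{\beta+\alpha N_0}-M_0\bigr\}.
\]
Since $\beta-\alpha N_0=\tfrac12\bigl(B(1-N_0)-A(1+N_0)\bigr)$ and $\beta+\alpha N_0=\tfrac12\bigl(B(1+N_0)-A(1-N_0)\bigr)$, the right-hand side $1+H/R_0$ is an explicit piecewise-linear function of $(A,B)$. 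Define $\Phi(A,B)$ as this function, written as a maximum of four affine functions in $(A,B)$ (expanding the absolute values). The claim is $\norm{S}\le\Phi(A,B)$. By symmetry ($S$ is conjugate, via the isometry swapping coordinates composed with a transpose-type duality argument, to a matrix with $A,B$ interchanged and $p$ replaced by $q$), we cannot fully symmetrize, so we keep both $A\ge B$ and $A\le B$.

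The plan is to use the norm formula
\[
\norm{S}=\sup_{\abs{x_1}^p+\abs{x_2}^p=1}\bigl(\abs{Ax_1+x_2}^p+B^p\abs{x_2}^p\bigr)^{1/p}.
\]
Each affine piece of $\Phi$ is an upper bound candidate. Because $\norm{S}$ is a convex function of $(A,B)$ and $\Phi$ is a maximum of affine functions, it is not enough to check vertices. Instead, I would prove, for each fixed unit $x=(x_1,x_2)$ with $x_1,x_2\ge0$ (the worst sign case), that $\norm{Sx}_p\le\Phi(A,B)$. Write $x_1=1/(1+t^p)^{1/p}$ and $x_2=t/(1+t^p)^{1/p}$, or the reversed roles. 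Then $\norm{Sx}_p^p=\bigl((A+t)^p+B^pt^p\bigr)/(1+t^p)$. By homogeneity in a suitable sense and convexity of $u\mapsto u^p$, I would bound this $p$-th root using the supporting-hyperplane (Hölder) inequality with dual vector $y$:
\[
\norm{Sx}_p=\max_{\norm{y}_q=1}y^{\top}Sx .
\]
Then I would choose a specific finite family of dual pairs, namely the ones built from $t_0$ (the vectors $(1,\pm t_0)$ normalized and their duals $(1,\pm t_0^{p-1})$), which produce exactly $L_0,M_0,N_0$. The aim is an inequality of the form $y^{\top}Sx\le R_0^{-1}\bigl(R_0+\text{affine}(A,B)\bigr)$ valid for all $x,y$. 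Equivalently, I would show
\[
R_0\,y^{\top}Sx-R_0\le\max\{\dots\},
\]
which splits as $R_0(y_1x_2)-R_0\le L_0$-type terms plus $A R_0 y_1x_1-BR_0y_2x_2\le$ the coefficient terms.

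The main obstacle is the coupling term $y_1x_2$ (coming from the $1$ entry), which has no free parameter. One needs $\sup(y_1x_2)=1$ to be traded against the diagonal gains. The strategy I would try first is to parametrize extremal pairs by Lagrange conditions. At a norming pair for $S$ one has $y=J_p(Sx)/\norm{Sx}$, and I would reduce to a one-variable inequality in $t$ and show that the function
\[
g(t)=\frac{\bigl((A+t)^p+B^pt^p\bigr)^{1/p}}{(1+t^p)^{1/p}}-\Phi(A,B)
\]
is nonpositive. Since $\Phi$ is linear in $(A,B)$ on each region, and the left side is convex and $1$-homogeneous in $(A,B,1)$ jointly, on each linearity region it suffices to check the inequality along rays: at $A=B=0$ (where $\norm{S}=1=\Phi$) and on the boundary faces where the two pieces of $\Phi$ coincide, together with the asymptotic direction $A,B\to\infty$. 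There $\norm{S}\sim\max(A,B)$, and one must check that the slopes of $\Phi$ dominate; this uses $N_0,L_0,M_0$ identities at the critical point $t_0$ (from $R'(t_0)=0$). That asymptotic and boundary slope comparison, relying on the first-order condition for $t_0$, is where I expect the real work to be.
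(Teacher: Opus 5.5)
Your proposal is a list of possible strategies rather than a proof, and none of them reaches an actual estimate. Several intermediate claims are also wrong. At $A=B=0$ one has $H(0,1,0)=L_0$, so your $\Phi(0,0)=1+L_0/R_0=M_0/R_0>1$, not $1$. A finite family of test pairs $(x,y)$ built from $t_0$ only yields lower bounds for $\norm{S}$, so it cannot give the required upper bound. Checking vertices is in fact sufficient. On each polyhedral region where your $\Phi$ equals an affine function $\ell$, the function $\norm{S}-\ell$ is convex in $(A,B)$. It is therefore controlled by its values at the vertices together with its asymptotic slopes along recession directions. For $A,B\ge0$ the piece $-(\beta+\alpha N_0)-M_0$ is never active. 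The linearity regions then have vertices $(0,0)$, $(0,R_0/\eta)$ and the triple point
\[
(\widetilde A,\widetilde B)=\Bigl(\frac{R_0(1-N_0)}{2\eta},\,\frac{R_0(1+N_0)}{2\eta}\Bigr),
\]
at which $\beta-\alpha N_0=0$ and all three active pieces of $H$ equal $L_0$. The recession directions are $(1,0)$, $(0,1)$ and $(1,1)$.

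The asymptotic slope comparison, which you single out as the real work, is immediate. The right-hand side grows with slope $(1+N_0)/(2R_0)$ along $(1,0)$ and $(0,1)$, and with slope $N_0/R_0$ along $(1,1)$, while $\norm{S}$ grows with slope $1$. Both comparisons follow from $R_0\le N_0\le1$, which holds with $t_0$ replaced by any $t$. The missing step is the genuinely quantitative bound at the triple point,
\[
\norm{\begin{pmatrix}\widetilde A&1\\0&-\widetilde B\end{pmatrix}}\le 1+\frac{L_0}{R_0}=\frac{M_0}{R_0},
\]
plus, in your framework, a separate check at $(0,R_0/\eta)$. This is where the first-order condition for $t_0$ actually matters, and your plan neither isolates nor proves it.

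The paper obtains this bound from a norm criterion based on convexity of $t\mapsto t^p$: if $A\le K-1$ and $B^p\le K^p-K^{p-1}$, then $\norm{S}\le K$. It applies this with $K=M_0/R_0=\tfrac12(N_0/\eta+1)$. The hypotheses are verified through $R_0\le N_0$, $N_0^2\ge\eta$ and $R_0(1+N_0)/(\eta+N_0)\le t_0$; the last two are derived from $\tanh(\eta z_0)\tanh z_0=\eta$.

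The paper also avoids region bookkeeping altogether. The set $\{H(A,1,B)\le n\}$ is cut out by $\abs{\beta-\alpha N_0}\le n-L_0$ and $\abs{\beta+\alpha N_0}\le n+M_0$. Since $A$ and $B$ are monotone in these coordinates and $\norm{S}$ is increasing in $A,B$, only the corners $(\widetilde A,\widetilde B)+s(1,1)$, $s\ge0$, matter. The triangle inequality with $\mathrm{diag}(1,-1)$ and $R_0\le N_0$ then reduces these to $s=0$.
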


\section{Proof of Theorem \ref{theo: main}}\label{sec:4}
	
	Theorem~\ref{theo: main} follows from Lemmas~\ref{lem: lower bound} and~\ref{lem: upper bound}
	once we show that it is enough to study operators of the form \eqref{eq: T}.
	
\begin{proof}[Proof of Theorem~\ref{theo: main}]
	We first reduce to the case \(p>2\). The cases \(p=1\) and \(p=2\) are
	classical: \(n(\ell_1^2)=v(J)=1\), while \(n(\ell_2^2)=v(J)=0\). Now let
	\(1<p<\infty\), \(p\ne2\), and let \(q=p/(p-1)\). Since
	\((\ell_p^2)^*=\ell_q^2\), the identity \(v(T^*)=v(T)\) gives
	\(n(\ell_p^2)=n(\ell_q^2)\). Moreover \(v_{\ell_p^2}(J)=v_{\ell_q^2}(J)\)
	\cite[Section~1]{MeriQuero2024}. Thus the range \(1<p<2\) follows from the
	range \(q>2\), and it remains to prove the result for \(p>2\).
	By the reduction used by Merí and Quero in the proof of \cite[Theorem~3]{MeriQuero2024}, it is enough to consider operators of the form
	\[
	T=
	\begin{pmatrix}
		a&b\\
		-c&-d
	\end{pmatrix},
	\qquad
	a,b,c,d\ge0.
	\]
	Next, we note that we may assume $b\geq c$. Indeed, consider the isometry
	\[
	S=
	\begin{pmatrix}
		0&1\\
		1&0
	\end{pmatrix}.
	\]
	By the invariance property (\ref{eq:invariance}), replacing \(T\) by \(-S^{-1}TS\) does not change the
	quotient \(v(T)/\|T\|\). A direct computation gives
	\[
	-S^{-1} T S
	=
	\begin{pmatrix}
		d&c\\
		-b&-a
	\end{pmatrix}.
	\]
	Thus this replacement preserves the class of matrices under consideration and
	interchanges the roles of \(b\) and \(c\). Next, by a density argument, it suffices to consider the case \(b>0\).  Indeed, if
	\(b=0\), then the assumption \(b\ge c\) forces \(c=0\). Since we only need
	to consider nonzero operators, we may also assume that \(a\) and \(d\) are not
	both zero. For $b_n>0$ with $b_n \to 0$, set
	\[
	T_n =
	\begin{pmatrix}
		a&b_n\\
		0&-d
	\end{pmatrix}.
	\]
	Since $v(T_n) \to v(T)$, $\|T_n\|\to \|T\|$, and $\|T\|>0$, if the desired inequality holds for
	\(T_n\), then we deduce that
	\[ \frac{v(T)}{\|T\|} = \lim_{n \to \infty} \frac{v(T_n)}{\|T_n\|} \geq v(J).\]
	Assume now that \(b>0\).  Since the quotient \(v(T)/\norm{T}\) is invariant under
	multiplication by a positive scalar, we divide by \(b\).  We obtain
	\[
	\frac1b T
	=
	\begin{pmatrix}
		a/b&1\\
		-c/b&-d/b
	\end{pmatrix}.
	\]
	Since \(0\le c/b\le1\), this matrix is of the normalized form \eqref{eq: T}. The result now follows for every $p>2$ by combining Lemma \ref{lem: lower bound} and Lemma \ref{lem: upper bound}.
\end{proof}

\section{Proof of Lemma~\ref{lem: lower bound}
	and Lemma~\ref{lem: upper bound}}\label{sec:lower-convexity}

We first prove the lower bound for the numerical radius.

\begin{proof}[Proof of Lemma~\ref{lem: lower bound}]
	By Lemma \ref{lem:numerical-radius-formula}, the numerical radius of an operator $T$ defined as in (\ref{eq: T}) is given by
	\[
	v(T)=\max\left\{
	\max_{0\le t\le1}
	\frac{\abs{a-dt^p}+t-(1-c)t^{p-1}}{1+t^p},
	\ 
	\max_{0\le t\le1}
	\frac{\abs{d-at^p}+\abs{-(1-c)t+t^{p-1}}}{1+t^p}
	\right\},
	\]
	where we have used $p>2$ to remove the absolute value. We evaluate both maxima at \(t=t_0\) and reformulate in terms of $L_0$, $M_0$, $N_0$, and $R_0$. For the first term, using
\[
a=\alpha-\beta,
\qquad
d=\alpha+\beta,
\]
we obtain
\[
a-dt_0^p
=
(\alpha-\beta)-(\alpha+\beta)t_0^p
=
\alpha(1-t_0^p)-\beta(1+t_0^p).
\]
Hence
\[
\frac{\abs{a-dt_0^p}}{1+t_0^p}
=
\abs{
	\alpha\frac{1-t_0^p}{1+t_0^p}-\beta
}
=
\abs{\alpha N_0-\beta}.
\]
Moreover,
\[
\frac{t_0-(1-c)t_0^{p-1}}{1+t_0^p}
=
\frac{t_0-t_0^{p-1}}{1+t_0^p}
+
c\frac{t_0^{p-1}}{1+t_0^p}
=
R_0+cL_0.
\]
Therefore,
\[
\frac{\abs{a-dt_0^p}+t_0-(1-c)t_0^{p-1}}{1+t_0^p}
=
R_0+cL_0+\abs{\alpha N_0-\beta}.
\]
For the second term,
\[
d-at_0^p
=
(\alpha+\beta)-(\alpha-\beta)t_0^p
=
\alpha(1-t_0^p)+\beta(1+t_0^p),
\]
and hence
\[
\frac{\abs{d-at_0^p}}{1+t_0^p}
=
\abs{
	\alpha\frac{1-t_0^p}{1+t_0^p}+\beta
}
=
\abs{\alpha N_0+\beta}.
\]
Moreover,
\begin{align*}
	\frac{\abs{(1-c)t_0-t_0^{p-1}}}{1+t_0^p}
	\geq
	\frac{(1-c)t_0-t_0^{p-1}}{1+t_0^p} 
	=
	\frac{t_0-t_0^{p-1}}{1+t_0^p}
	-
	c\frac{t_0}{1+t_0^p} 
	=
	R_0-cM_0.
\end{align*}
Therefore,
\[
\frac{\abs{d-at_0^p}+\abs{-(1-c)t_0+t_0^{p-1}}}{1+t_0^p}
\ge
R_0-cM_0+\abs{\alpha N_0+\beta}.
\]
Taking the maximum of the two lower bounds gives the result. \qedhere
\end{proof}

We next prove the upper bound for the normalized sign class, assuming the upper
triangular estimate of Lemma~\ref{lem: upper triangular}.

\begin{proof}[Proof of Lemma~\ref{lem: upper bound}]
	Assume first that \(0<c\le1\).  Write
	\[
	T=(1-c)J+cS,
	\qquad
	S=
	\begin{pmatrix}
		a/c & 1\\
		0 & -d/c
	\end{pmatrix}.
	\]
	By convexity of the operator norm,
	\[
	\norm{T}
	\le
	(1-c)\norm{J}+c\norm{S}
	=
	(1-c)+c\norm{S}.
	\]
	Applying Lemma~\ref{lem: upper triangular} to \(S\) and the homogeneity property (\ref{eq: homogeneity of H}), we deduce that
	\[
	\norm{S}
	\le
	\frac{v(J)+H(a/c,1,d/c)}{v(J)} = 1+ \frac{H(a,c,d)}{c v(J)}.
	\]
	Consequently,
	\begin{align*}
		\norm{T}
		\leq
		(1-c)+
		c\left(1 + \frac{H(a,c,d)}{c v(J)}
		\right)
		=
		1+\frac{H(a,c,d)}{v(J)}.
	\end{align*}
	If \(c=0\), we argue by approximation.  For \(0<c_n\le1\) with \(c_n\downarrow0\), set
	\[
	T_n :=
	\begin{pmatrix}
		a & 1\\
		-(1-c_n) & -d
	\end{pmatrix}.
	\]
	By the case already proved,
	\[
	\norm{T_n}
	\le
	\frac{v(J)+H(a,c_n,d)}{v(J)}.
	\]
	Since \(\|T_n\|\to \|T\|\) and \(H(a,c_n,d)\to H(a,0,d)\), letting
	\(n\to\infty\) gives
	\[
	\norm{T}
	\le
	\frac{v(J)+H(a,0,d)}{v(J)}.\qedhere
	\]
	
\end{proof}

\section{Proof of Lemma \ref{lem: upper triangular}}\label{sec:triangular}

We first record a simple norm criterion.

\begin{lemma}\label{lem:norm-criterion}
Let $A,B\ge0$ and $K\ge1$.  If
\[
 A\le K-1
 \qquad\text{and}\qquad
 B^p\le K^p-K^{p-1},
\]
then
\[
 \left\|\begin{pmatrix}A&1\\0&-B\end{pmatrix}\right\|\le K.
\]
\end{lemma}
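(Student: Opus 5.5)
The plan is to bound the $\ell_p$-norm of the image of an arbitrary vector directly, using only the triangle inequality and the convexity of $s\mapsto s^p$ on $[0,\infty)$ (recall $p>2\ge1$). For $(x,y)\in\ell_p^2$ we have
\[
\begin{pmatrix}A&1\\0&-B\end{pmatrix}\begin{pmatrix}x\\y\end{pmatrix}=\begin{pmatrix}Ax+y\\-By\end{pmatrix}.
\]
So it suffices to prove
\[
\abs{Ax+y}^p+B^p\abs{y}^p\le K^p\bigl(\abs{x}^p+\abs{y}^p\bigr).
\]

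First I would use $A\ge0$ and the hypothesis $A\le K-1$ to get $\abs{Ax+y}\le (K-1)\abs{x}+\abs{y}$. Since $K\ge1$, I rewrite the right-hand side as a convex combination scaled by $K$:
\[
(K-1)\abs{x}+\abs{y}=K\Bigl(\tfrac{K-1}{K}\abs{x}+\tfrac1K\abs{y}\Bigr),
\]
with weights $\tfrac{K-1}{K},\tfrac1K\in[0,1]$ summing to $1$. Jensen's inequality for $s^p$ then gives
\[
\abs{Ax+y}^p\le K^p\Bigl(\tfrac{K-1}{K}\abs{x}^p+\tfrac1K\abs{y}^p\Bigr)=(K^p-K^{p-1})\abs{x}^p+K^{p-1}\abs{y}^p .
\]

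Next I would add the second coordinate and use the hypothesis $B^p\le K^p-K^{p-1}$:
\[
\abs{Ax+y}^p+B^p\abs{y}^p\le (K^p-K^{p-1})\abs{x}^p+\bigl(K^{p-1}+K^p-K^{p-1}\bigr)\abs{y}^p\le K^p\bigl(\abs{x}^p+\abs{y}^p\bigr).
\]
Taking $p$-th roots and the supremum over the unit sphere yields the claimed norm bound.

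There is no serious obstacle. The only point needing care is the choice of weights in the convexity step: the weights $\tfrac{K-1}{K}$ and $\tfrac1K$ are exactly what make the coefficient of $\abs{y}^p$ equal to $K^{p-1}$, leaving room $K^p-K^{p-1}$ for the $B^p$ term. The degenerate case $K=1$ forces $A=0$ and is covered by the same computation, since the weights become $0$ and $1$.
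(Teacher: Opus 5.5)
Your proof is correct and follows the paper's argument: you use the same Jensen step with weights $\frac{K-1}{K}$ and $\frac1K$ to get the bound $K^{p-1}$ on the $y$-coefficient, and then absorb $B^p$ into the remaining $K^p-K^{p-1}$. The only difference is cosmetic: you bound $|Ax+y|\le (K-1)|x|+|y|$ for arbitrary $(x,y)$, whereas the paper first restricts to nonnegative vectors parametrized as $(1,s)/(1+s^p)^{1/p}$, a reduction that your triangle-inequality step justifies directly.
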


\begin{proof}
For $A,B\ge0$, the norm of this operator may be computed on vectors with
nonnegative coordinates. Thus, a parametrization of the positive quadrant of the unit sphere of \(\ell_p^2\) by
\[
\frac{(1,s)}{(1+s^p)^{1/p}},
\qquad s\ge0,
\]
shows that
\begin{equation}\label{eq:norm-of-S}
 \left\|\begin{pmatrix}A&1\\0&-B\end{pmatrix}\right\|^p
 =\sup_{s\ge0}\frac{(A+s)^p+B^ps^p}{1+s^p}.
\end{equation}
Hence it is enough to show
\[
 (A+s)^p+B^ps^p\le K^p(1+s^p)
 \qquad\text{for all }s\ge0.
\]
Since \(A\le K-1\), we have
	\[
	(A+s)^p\le (K-1+s)^p.
	\]
	Now write
	\[
	\frac{K-1+s}{K}
	=
	\frac{K-1}{K}\cdot 1+\frac1K\cdot s.
	\]
	By convexity of \(t\mapsto t^p\),
	\[
	\left(\frac{K-1+s}{K}\right)^p
	\le
	\frac{K-1}{K}+\frac{s^p}{K}.
	\]
	Multiplying by \(K^p\), we get
	\[
	(K-1+s)^p
	\le
	K^{p-1}(K-1+s^p).
	\]
	Using the assumption on $B$ we obtain
	\begin{align*}
		(A+s)^p+B^ps^p&
		\leq
		K^{p-1}(K-1+s^p)+K^{p-1}(K-1)s^p 
		=
		K^{p-1}(K-1)+K^ps^p
		\\
		&\leq
		K^p(1+s^p).\qedhere
	\end{align*}
	
\end{proof}

We also need the following identities and inequalities, which we prove in Section \ref{sec:7}.

\begin{lemma}\label{lem:constants}
	The constants \(L_0,M_0,N_0,R_0\) and \(\eta\) satisfy:
	\begin{enumerate}[label=\textup{(\roman*)}]
		\item \(M_0-L_0=R_0\).
		\item \(M_0+L_0=\dfrac{R_0N_0}{\eta}\).
		\item $L_0=\frac{R_0}{2}\left(\frac{N_0}{\eta}-1\right)$, and $M_0=\frac{R_0}{2}\left(\frac{N_0}{\eta}+1\right).$
		\item \(\dfrac{L_0}{M_0}=t_0^{p-2}\).
		\item \(R_0\le N_0\).
		\item \(N_0^2\ge \eta\).
		\item \(\dfrac{R_0(1+N_0)}{\eta+N_0}\le t_0\).
	\end{enumerate}
\end{lemma}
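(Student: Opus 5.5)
The plan is to derive everything from the first-order condition \(R'(t_0)=0\), after rewriting it in a convenient form. Items (i) and (iv) need no work: (i) is the definition \(R=M-L\), and (iv) follows from \(L(t)/M(t)=t^{p-1}/t=t^{p-2}\). For the rest, compute the numerator of \(R'(t)\):
\[
(1-(p-1)t^{p-2})(1+t^p)-p\,t^{p-1}(t-t^{p-1}).
\]
It simplifies to \(1-(p-1)t^p-(p-1)t^{p-2}+t^{2p-2}\). Hence \(t_0\in(0,1)\) satisfies
\begin{equation*}
1-(p-1)t_0^p-(p-1)t_0^{p-2}+t_0^{2p-2}=0.
\end{equation*}
This identity is the only place the extremality of \(t_0\) is used.

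For (ii), substitute the definitions, recall \(\eta=(p-2)/p\), clear denominators, and divide by \(t_0\). The claim becomes
\[
(p-2)(1+t_0^{p-2})(1+t_0^p)=p(1-t_0^{p-2})(1-t_0^p).
\]
Expanding both sides, the right side minus the left side equals exactly twice the critical-point expression, so it vanishes. Item (iii) then follows by solving the linear system formed by (i) and (ii) for \(L_0\) and \(M_0\).

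Item (v) is in fact true for every \(t\in[0,1]\), since
\[
(1-t^p)-(t-t^{p-1})=(1-t)(1+t^{p-1})\ge0 .
\]

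For (vi), which I expect to be the only non-mechanical step, combine (iii) and (iv). Dividing the two formulas in (iii) gives
\[
t_0^{p-2}=\frac{N_0-\eta}{N_0+\eta}.
\]
Inverting the definition of \(N_0\) gives \(t_0^p=(1-N_0)/(1+N_0)\). Dividing these,
\[
t_0^2=\frac{(1-N_0)(N_0+\eta)}{(1+N_0)(N_0-\eta)} .
\]
Here \(N_0>\eta\) because \(L_0>0\) in (iii). Now \(t_0<1\) gives \(t_0^2\le 1\), i.e. \((1-N_0)(N_0+\eta)\le(1+N_0)(N_0-\eta)\), and this expands to exactly \(2\eta\le 2N_0^2\).

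For (vii), use (i) and (iii) to write
\[
R_0=M_0-L_0=\frac{2\eta M_0}{N_0+\eta}.
\]
Also \(M_0=t_0/(1+t_0^p)\) and \(1+t_0^p=2/(1+N_0)\). The inequality \(R_0(1+N_0)/(\eta+N_0)\le t_0\) then reduces to
\[
\eta(1+N_0)^2\le (N_0+\eta)^2 .
\]
After expansion the cross terms \(2\eta N_0\) cancel, leaving \(\eta(1-\eta)\le N_0^2(1-\eta)\). Since \(0<\eta<1\), this is equivalent to (vi). So the main obstacle is (vi), and the key idea there is to express \(t_0^2\) through \(N_0\) and \(\eta\) using (iii)–(iv), so that \(t_0<1\) turns into the desired inequality.
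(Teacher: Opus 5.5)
Your proof is correct, and it takes a genuinely different route from the paper's. The paper substitutes $t^p=e^{-2z}$, so that $M=e^{\eta z}/(2\cosh z)$, $L=e^{-\eta z}/(2\cosh z)$ and $N=\tanh z$, and the critical condition becomes $\tanh(\eta z_0)\tanh z_0=\eta$. From there:
\begin{itemize}
\item (ii) is immediate;
\item (vi) follows in one line from $\tanh(\eta z_0)\le\tanh z_0$;
\item (vii) needs a separate hyperbolic computation, ending in $\sinh(2\eta z_0)/\sinh(2z_0)\le e^{-2(1-\eta)z_0}$.
\end{itemize}
You stay in the variable $t$ and use the polynomial critical equation; the difference of the two sides in your cleared form of (ii) is indeed exactly twice that expression. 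Your key move is to write $t_0^{p-2}=(N_0-\eta)/(N_0+\eta)$ and $t_0^p=(1-N_0)/(1+N_0)$. This makes $t_0^2$ an explicit rational function of $N_0$ and $\eta$, so $t_0<1$ becomes literally $N_0^2>\eta$. You then show that, given (i)--(iv), (vii) is equivalent to $(1-\eta)(N_0^2-\eta)\ge0$, i.e.\ to (vi).

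Your route is fully elementary and gives a structural insight: (vi) and (vii) collapse into a single inequality, and both amount to $t_0\le1$. The paper's parametrization makes (ii)--(iii) more transparent, since $M_0+L_0=\cosh(\eta z_0)/\cosh z_0$ and $R_0=\sinh(\eta z_0)/\cosh z_0$, but it pays for this with an independent estimate for (vii).

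You use two facts implicitly, and both hold because $t_0\in(0,1)$ and $p>2$: $R_0>0$ (needed for $N_0>\eta$ and for dividing the two formulas in (iii)), and $0<\eta<1$.
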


\begin{proof}[Proof of Lemma~\ref{lem: upper triangular}]
	Fix \(n\ge0\) and suppose that \(A,B\ge0\) satisfy $H(A,1,B)\le n.$ Recall that $v(J)=R_0$. We will show that
	\begin{equation}\label{eq:levels}
	\left\|
	\begin{pmatrix}
		A&1\\
		0&-B
	\end{pmatrix}
	\right\|
	\le
	\frac{R_0+n}{R_0}.
	\end{equation}
	Let
	\[
	\alpha=\frac{A+B}{2},
	\qquad
	\beta=\frac{B-A}{2},
	\]
	and define
	\[
	\Phi:=\beta-\alpha N_0,
	\qquad
	\Psi:=\beta+\alpha N_0.
	\]
	The constraint \(H(A,1,B)\le n\) is equivalent to
	\begin{equation}\label{eq:constraint}
	\abs{\Phi}\le n-L_0,
	\qquad
	\abs{\Psi}\le n+M_0.
	\end{equation}
	In particular, if the region is nonempty, then \(n\ge L_0\). Solving for \(\alpha\) and \(\beta\), we get
	\[
	\alpha=\frac{\Psi-\Phi}{2N_0},
	\qquad
	\beta=\frac{\Phi+\Psi}{2}.
	\]
	Using \(A=\alpha-\beta\) and \(B=\alpha+\beta\), we can write $A$ and $B$ in terms of $\Phi$ and $\Psi$. Namely,
	\[
	A(\Phi,\Psi)=
	\frac12\left[
	\Psi\left(\frac1{N_0}-1\right)
	-
	\Phi\left(\frac1{N_0}+1\right)
	\right],
	\]
	and
	\[
	B(\Phi,\Psi)=
	\frac12\left[
	\Psi\left(\frac1{N_0}+1\right)
	-
	\Phi\left(\frac1{N_0}-1\right)
	\right].
	\]
	Since \(0<N_0<1\), both $(N_0^{-1} +1)$ and $(N_0^{-1} - 1)$ are positive. Hence, the formulas above show that \(A(\Phi,\Psi)\) and \(B(\Phi,\Psi)\) are increasing functions of \(\Psi\) and decreasing functions
	of \(\Phi\). Therefore for every feasible $(\Phi, \Psi)$ satisfying \eqref{eq:constraint}, the corresponding variables $A(\Phi,\Psi)$ and $B(\Phi,\Psi)$ are dominated by $A_*:=A(\Phi_*,\Psi_*)$ and $B_*:=B(\Phi_*,\Psi_*)$, where
	\[
	\Phi_*:=-(n-L_0),\qquad \Psi_*:=n+M_0.
	\] 
	Furthermore, by \eqref{eq:norm-of-S}, the norm of the triangular matrix is increasing
	in both \(A\) and \(B\). Therefore it is enough to prove the desired inequality (\ref{eq:levels}) for the corner $(A_*,B_*)$. A simple computation gives 
	\[
	A_* = \frac{n}{N_0}
	+
	\frac{M_0(1-N_0)-L_0(1+N_0)}{2N_0},
	\qquad
	B_* = \frac{n}{N_0}
	+
	\frac{M_0(1+N_0)-L_0(1-N_0)}{2N_0}.
	\]
	Using \(M_0-L_0=R_0\), we can write this corner as
	\[
	A_*=\widetilde A+\frac{n-L_0}{N_0},
	\qquad
	B_*=\widetilde B+\frac{n-L_0}{N_0},
	\]
	where
	\[
	\widetilde A:=\frac{(M_0+L_0)(1-N_0)}{2N_0},
	\qquad
	\widetilde B:=\frac{(M_0+L_0)(1+N_0)}{2N_0}.
	\]
	Hence, by the triangle inequality and \(R_0\le N_0\),
	\begin{align*}
		\left\|
		\begin{pmatrix}
			A_*&1\\
			0&-B_*
		\end{pmatrix}
		\right\|
		&\le
		\left\|
		\begin{pmatrix}
			\widetilde A&1\\
			0&-\widetilde B
		\end{pmatrix}
		\right\|
		+
		\frac{n-L_0}{N_0}
		\left\|
		\begin{pmatrix}
			1&0\\
			0&-1
		\end{pmatrix}
		\right\| \leq 
		\left\|
		\begin{pmatrix}
			\widetilde A&1\\
			0&-\widetilde B
		\end{pmatrix}
		\right\|
		+
		\frac{n-L_0}{R_0}.
	\end{align*}
	We claim that
	\begin{equation}\label{eq:special-base}
		\left\|
		\begin{pmatrix}
			\widetilde A&1\\
			0&-\widetilde B
		\end{pmatrix}
		\right\|
		\le
		1+\frac{L_0}{R_0}.
	\end{equation}
	Note that the result follows from this claim since for any $A$, $B\geq 0$ with $H(A,1,B)\leq n$ we have
	\[
	\left\|
	\begin{pmatrix}
		A&1\\
		0&-B
	\end{pmatrix}
	\right\|
	\leq 
	\left\|
	\begin{pmatrix}
		A_*&1\\
		0&-B_*
	\end{pmatrix}
	\right\|
	\le
	1+\frac{L_0}{R_0}+\frac{n-L_0}{R_0}
	=
	\frac{R_0+n}{R_0}.
	\]
	Thus, it remains to prove \eqref{eq:special-base}. Observe that by Lemma~\ref{lem:constants}, we have
	
	\[
	K:
	=
	1+\frac{L_0}{R_0}
	\overset{\textup{(i)}}{=}
	\frac{M_0}{R_0}
	\overset{\textup{(iii)}}{=}
	\frac12\left(\frac{N_0}{\eta}+1\right).
	\]
	By Lemma~\ref{lem:norm-criterion}, it suffices to show
	\[
	\widetilde A\le K-1
	\quad \mbox{and} \quad
	\widetilde{B}^p\le K^p-K^{p-1}.
	\]
	For the first inequality, again using Lemma~\ref{lem:constants},
	\[
	\widetilde A
	=
	\frac{(M_0+L_0)(1-N_0)}{2N_0}
	\;\overset{\textup{(ii)}}{=}\;
	\frac{R_0(1-N_0)}{2\eta}
	\;\overset{\textup{(v)}}{\le}\;
	\frac{N_0(1-N_0)}{2\eta}
	\;\overset{\textup{(vi)}}{\le}\;
	\frac{N_0-\eta}{2\eta}
	\;\overset{\textup{(iii)}}{=}\;
	\frac{L_0}{R_0} \; = \; K-1.
	\]
	For the second inequality, first observe that
	\[
	\frac{\widetilde B}{K}
	=
	\frac{(M_0+L_0)(1+N_0)}{2N_0K}
	\;\overset{\textup{(ii)}}{=}\;
	\frac{R_0(1+N_0)}{\eta+N_0}
	\;\overset{\textup{(vii)}}{\le}\;
	t_0.
	\]
	Therefore,
	\[
	\left(\frac{\widetilde B}{K}\right)^p
	\le
	t_0^p
	\le
	t_0^{p-2}
	\;\overset{\textup{(iv)}}{=}\;
	\frac{L_0}{M_0}
	\;\overset{\textup{(i)}}{=}\;
	1-\frac1K.
	\]
	Multiplying by \(K^p\), we obtain
	\[
	\widetilde{B}^p\le K^p-K^{p-1}.
	\]
	Thus Lemma~\ref{lem:norm-criterion} gives \eqref{eq:special-base}, and the proof is complete.
\end{proof}

\section{Proof of Lemma \ref{lem:constants}}\label{sec:7}

In order to prove Lemma~\ref{lem:constants}, we introduce the change of variables
\[
t^p=e^{-2z},
\qquad z\ge0.
\]
Using $\eta=(p-2)/p$, we compute
\[
M(t)=\frac{e^{\eta z}}{2\cosh z},
\qquad
L(t)=\frac{e^{-\eta z}}{2\cosh z},
\qquad
N(t)=\tanh z,
\qquad 
R(t)=M(t)-L(t)=\frac{\sinh(\eta z)}{\cosh z}.
\]
Let \(z_0\) be defined by \(t_0^p=e^{-2z_0}\), where $t_0$ is the maximizer of $R(t)$.

\begin{proof}[Proof of Lemma~\ref{lem:constants}]
	First, we record a useful identity for the maximizer $z_0$. Differentiating $R(t)$ gives that $z_0$ is characterized by
	\[
	\eta\cosh(\eta z_0)\cosh z_0
	=
	\sinh(\eta z_0)\sinh z_0.
	\]
	Equivalently,
	\begin{equation}\label{eq:critical}
		\tanh(\eta z_0)\tanh z_0=\eta.
	\end{equation}
Note that (i) is immediate from the definition of $L_0$, $M_0$, and $R_0$. Next, since
\[
 N_0=\tanh z_0,
 \qquad
 M_0+L_0=\frac{\cosh(\eta z_0)}{\cosh z_0},
 \qquad
 R_0=\frac{\sinh(\eta z_0)}{\cosh z_0},
\]
the identity (\ref{eq:critical}) gives
\[
 \eta(M_0+L_0)=N_0R_0.
\]
Dividing by $\eta$ proves (ii). Moreover, combining the above identity with $M_0-L_0=R_0$ gives (iii). Furthermore, (iv)  follows directly from
\[
 \frac{L(t)}{M(t)}=t^{p-2}.
\]
Next, observe that for any $0\leq t\leq 1$ we have
\[
 N(t)-R(t)=\frac{1-t^p-t+t^{p-1}}{1+t^p}
 =\frac{(1-t)(1+t^{p-1})}{1+t^p}\geq 0.
\]
Taking $t=t_0$ proves (v). The inequality (vi) follows from the identity (\ref{eq:critical}) and the fact that the function $\tanh$ is increasing. Indeed, since $\eta <1$ notice that 
\[ \eta = \tanh(\eta z_0)\tanh z_0 \leq \tanh^2 z_0 = N_0^2.\]
It remains to prove (vii).  Using the hyperbolic expressions above, we get
\[
\frac{R_0(1+N_0)}{\eta+N_0}
=
\frac{\frac{\sinh(\eta z_0)}{\cosh z_0}(1+\tanh z_0)}
{\eta+\tanh z_0}.
\]
The critical identity \eqref{eq:critical} gives
\[
\eta+\tanh z_0
=
\tanh z_0\bigl(1+\tanh(\eta z_0)\bigr).
\]
Hence
\[
\frac{R_0(1+N_0)}{\eta+N_0}
=
\frac{\sinh(\eta z_0)}{\cosh z_0}
\frac{1+\tanh z_0}
{\tanh z_0(1+\tanh(\eta z_0))}.
\]
Using
\[
1+\tanh z=\frac{e^z}{\cosh z},
\]
we obtain
\[
\frac{1+\tanh z_0}{1+\tanh(\eta z_0)}
=
\frac{e^{z_0}\cosh(\eta z_0)}
{e^{\eta z_0}\cosh z_0}
=
e^{(1-\eta)z_0}\frac{\cosh(\eta z_0)}{\cosh z_0}.
\]
Thus
\[
\frac{R_0(1+N_0)}{\eta+N_0}
=
e^{(1-\eta)z_0}
\frac{\sinh(\eta z_0)\cosh(\eta z_0)}
{\sinh z_0\cosh z_0}
=
e^{(1-\eta)z_0}
\frac{\sinh(2\eta z_0)}{\sinh(2z_0)}.
\]
Since \(0<\eta<1\),
\[
\frac{\sinh(2\eta z_0)}{\sinh(2z_0)}
=
e^{-2(1-\eta)z_0}
\frac{1-e^{-4\eta z_0}}{1-e^{-4z_0}}
\le
e^{-2(1-\eta)z_0}.
\]
Consequently,
\[
\frac{R_0(1+N_0)}{\eta+N_0}
\le
e^{-(1-\eta)z_0}.
\]
Finally, since \(1-\eta=2/p\) and \(t_0^p=e^{-2z_0}\), we have
\[
e^{-(1-\eta)z_0}=e^{-2z_0/p}=t_0.
\]
This proves (vii), and the proof is complete.
\end{proof}

\section*{Acknowledgements}
The author thanks Alicia Quero for helpful discussions and comments related to this work.

\bibliographystyle{plain}
\bibliography{bib}

\end{document}